\newtheorem{thm}{Theorem}[section]
\newtheorem{lem}[thm]{Lemma}
\newtheorem{pro}[thm]{Proposition}
\newtheorem{coro}[thm]{Corollary}
\theoremstyle{definition}
\newtheorem{remark}[thm]{Remark}
\newtheorem{example}[thm]{Example}
\numberwithin{equation}{section}
\def\bQ{\mathbb{Q}}
\def\bR{\mathbb{R}}
\def\bT{\mathbb{T}}
\def\bZ{\mathbb{Z}}
\def\cA{\mathcal{A}}
\def\eps{\epsilon}
\def\tf{\tilde{f}}
\def\tg{\tilde{g}}
\def\tw{\tilde{w}}
\def\tx{\tilde{x}}
\def\tA{\tilde{A}}
\def\tD{\tilde{D}}
\def\tU{\tilde{U}}
\def\pa{\partial}
\def\ds{\displaystyle}
\begin{document}

\title{Twist interval for twist maps}

\author{Pengfei Zhang}
\address{Department of Mathematics,
University of Oklahoma, Norman, OK 73019}
\email{pengfei.zhang@ou.edu}

\subjclass[2000]{37E40 37E45}

\keywords{Twist map, twist interval, rotation numbers, rotation set, non-wandering, invariant curves}

\begin{abstract}
The twist interval of a twist map on the annulus $A=\bT\times [0,1]$ 
has nonempty interior if $f$ preserves the area,
but could be degenerate for general twist maps.
In this note, we show that if a twist map $f$ is non-wandering,
then the twist interval of $f$ is non-degenerate.
Moreover, if there are two disjoint invariant curves of $f$,
then their rotation numbers must be different 
(no matter if they are rational or irrational).
\end{abstract}

\maketitle

\section{Introduction}\label{intro}

Let $f$ be an orientation-preserving homeomorphism on the closed annulus $A=\bT\times [0,1]$, 
$(x,y)\mapsto (x_1,y_1)$.
Suppose $f$ preserves the two boundaries of $A$: $y_1=0$ if $y=0$, and $y_1=1$ if $y=1$.
The restriction of $f$ to each boundary $\bT\times \{i\}$, denoted by $f_i$, is a circle homeomorphism, $i=0, 1$.
Let $\rho(f_i)$ be the rotation number of $f_i$.
More generally, one can define the rotation set $I_f$ of $f$ on the whole annulus $A$,
see Section \ref{prelim} for more details. 
The rotation set $I_f$ could be complicated for general annulus maps.

The map $f$ on $A$ is said to satisfy a (positive) {\it twist condition} if for each $x\in \bT$,
the map $y\mapsto x_1(x,y)$ is strictly increasing. We will call such $f$ a  twist map.
For example, the map $f(x,y)=(x+y,y)$ satisfies the twist condition.

It is easy to see that 
for a twist map $f$ on $A$,  the rotation set of $f$ satisfies $I_f \subset[\rho(f_0),\rho(f_1)]$.
In the following, $[\rho(f_0),\rho(f_1)]$ will be called the {\it twist interval} of the twist map $f$.
Note that it is possible that $\rho(f_0)=\rho(f_1)$ for a twist map,
and hence the twist interval can degenerate to a single point, 
a phenomena caused by the {\it mode-locking effect}, see \S \ref{exam} for examples.

We need an extra condition to guarantee the  non-degeneracy of twist intervals.
Recall that a map $f$ is called {\it non-wandering} if the non-wandering set of $f$
equals the whole space.
\begin{thm}\label{mainthm}
Let $f$ be a non-wandering twist map on $A$.
Then $\rho(f_0)<\rho(f_1)$.
\end{thm}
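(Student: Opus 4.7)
I would argue by contradiction. Suppose $\rho(F_0) = \rho(F_1) = \rho$ for a chosen lift $F$ of $f$ to $\tA = \bR \times [0, 1]$. The twist condition, restricted to the two boundaries, reads $F_1(x) > F_0(x)$ pointwise on $\bR$, and because $F_1 - F_0$ is continuous and $\bZ$-periodic, there is a uniform $\delta > 0$ with $F_1(x) - F_0(x) \ge \delta$ for every $x$.

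The first substantive step is to propagate this gap through iteration: $F_1^n(x) - F_0^n(x) \ge \delta$ for every $n \ge 1$. I would prove this by induction, using strict monotonicity of $F_0$ and $F_1$ (both lifts of orientation-preserving circle homeomorphisms):
\[
F_1^{n+1}(x) = F_1\bigl(F_1^n(x)\bigr) \ge F_1\bigl(F_0^n(x) + \delta\bigr) \ge F_0\bigl(F_0^n(x) + \delta\bigr) + \delta \ge F_0^{n+1}(x) + \delta.
\]

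Next I would exploit the non-wandering hypothesis to contradict this lower bound. First I claim that each boundary map $f_i$ is non-wandering on $\bT$. Granting this, the classification of non-wandering circle homeomorphisms splits the analysis. If $\rho = p/q \in \bQ$, then $f_i^q = \mathrm{id}$ on $\bT$, so the lifts satisfy $F_i^q(x) = x + p$ identically, giving $F_1^q - F_0^q \equiv 0$ in direct conflict with $F_1^q - F_0^q \ge \delta$. If $\rho \notin \bQ$, then each $f_i$ is conjugate to the rotation $R_\rho$; writing $F_i = H_i^{-1} \circ T_\rho \circ H_i$ with $H_i$ a $T_1$-equivariant lift of the conjugacy, one computes
\[
F_i^n(x) - x - m = H_i^{-1}\bigl(H_i(x) + (n\rho - m)\bigr) - H_i^{-1}\bigl(H_i(x)\bigr),
\]
which, by uniform continuity of $H_i^{-1}$ (ensured by its $T_1$-equivariance), tends to zero uniformly in $x$ whenever $|n\rho - m| \to 0$. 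Choosing $n_k \to \infty$ and $m_k \in \bZ$ with $n_k \rho - m_k \to 0$ (possible since $\rho$ is irrational), both $F_0^{n_k}$ and $F_1^{n_k}$ approach the translation $x \mapsto x + m_k$ uniformly on $\bR$, whence $F_1^{n_k} - F_0^{n_k} \to 0$ uniformly, again contradicting the gap $\ge \delta$.

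The main obstacle I anticipate is establishing that $f_0, f_1$ are non-wandering on $\bT$. Nested neighborhoods $V \times [0, 1/k)$ of boundary points must return under $f$, and when return times stay bounded as $k \to \infty$ a direct continuity argument transfers the return to $\bT$; the subtle case is unbounded return times, which I would attempt to rule out via a Krylov--Bogolyubov limit of empirical measures along the return orbits to produce an $f$-invariant probability measure whose ergodic components (each supported either on a single boundary circle or in the open interior, since the boundaries are $f$-invariant) include one on the boundary with mass near the prescribed point.
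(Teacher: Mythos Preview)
Your argument hinges on the claim that if $f$ is non-wandering on $A$ then each boundary restriction $f_i$ is non-wandering on its circle. This implication is false, even for twist maps. Take the time-$1$ map of the flow $\dot x=\sin^2(\pi x)+y$, $\dot y=0$ on $A$: one checks $\partial x_1/\partial y>0$, so this is a twist map; every point with $y>0$ lies on an invariant circle on which the map is conjugate to a rotation and is therefore recurrent, so $f$ is non-wandering on all of $A$; yet $f_0$ has a single fixed point at $x=0$ and every other boundary point wanders. Your Krylov--Bogolyubov fallback does not rescue this: the long return orbits issuing from a small neighborhood of $(x_0,0)$ spend almost all their time near $x=0$, so the limiting invariant measure is $\delta_{(0,0)}$ and carries no mass near $(x_0,0)$. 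Consequently, in the rational case you cannot conclude $F_i^q(x)=x+p$, and without that identity your uniform gap $F_1^n-F_0^n\ge\delta$ has nothing to contradict --- when the boundary maps have isolated periodic orbits, $F_1^n-F_0^n$ simply remains bounded between two positive constants forever. (Your irrational case also invokes the false claim, since a genuine conjugacy to $R_\rho$ requires minimality; that case, however, is salvaged without it by the strict monotonicity of the rotation number at irrational values, which is exactly Proposition~\ref{irrational} in the paper.)

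The paper's proof of the rational case is entirely different and genuinely two-dimensional: it uses the non-wandering hypothesis on $A$ itself, never on the boundary circles. After reducing to $\rho_0=\rho_1=0$, one takes a fixed point $(\tx_0,0)$ of the lift and the vertical segment $L_0=\{\tx_0\}\times[0,1]$. The twist condition forces $\tf(L_0)$ to meet $L_0$ only at the common endpoint $(\tx_0,0)$, so the wedge $\tD_0$ between them is a wandering domain for $\tf$ on $\tA$. Non-wandering of $f$ on $A$ then forces some iterate $\tf^n(L_0)$ to reach past the translate $L_0+(1,0)$; but both endpoints of $\tf^n(L_0)$ are trapped to the left of $L_0+(1,0)$ by the boundary fixed points, so $\tf^n(L_0)$ must cross $L_0+(1,0)$ at least twice. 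This yields a topological horseshoe and in particular a periodic point with nonzero rotation number, contradicting $I_f=\{0\}$.
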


As an application of the above theorem, we have the following result.
\begin{coro}
Let $f$ be a non-wandering  twist map on $A$. 
Then the rotation numbers of any two disjoint invariant curves of $f$  are different.
\end{coro}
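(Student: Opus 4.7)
The plan is to argue by contradiction: suppose $C_0$ and $C_1$ are two disjoint invariant (essential) curves of $f$ with a common rotation number $\rho$. The strategy is to reduce to Theorem \ref{mainthm} by restricting $f$ to the closed sub-annulus bounded by $C_0$ and $C_1$, producing a non-wandering twist map whose two boundary rotation numbers coincide.

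Since $C_0,C_1$ are disjoint essential simple closed curves in $A$, they cobound a closed sub-annulus $A' \subset A$; order them so that $C_0$ lies below $C_1$. Both curves, and hence $A'$, are $f$-invariant. Writing each $C_i$ as the graph of a function $\phi_i: \bT \to [0,1]$ with $\phi_0 < \phi_1$ (Birkhoff's graph theorem, applicable in the non-wandering setting), I identify $A'$ with $\bT\times[0,1]$ via the fiber-preserving homeomorphism $h(x,t) = (x,(1-t)\phi_0(x)+t\phi_1(x))$. The conjugated map $\hat f = h^{-1}\circ f|_{A'}\circ h$ preserves both boundaries of $\bT\times[0,1]$, and its first coordinate equals $x_1(x,\phi_t(x))$, which is strictly increasing in $t$ as a composition of the strictly increasing maps $t\mapsto\phi_t(x)$ and $y\mapsto x_1(x,y)$. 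Hence $\hat f$ satisfies the twist condition on $\bT\times[0,1]$.

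The next step is to verify that $\hat f$ is non-wandering, equivalently that $\Omega(f|_{A'}) = A'$. The set $\Omega(f|_{A'})$ is always closed, so it suffices to show it contains the dense subset $A'\setminus(C_0\cup C_1)$. For an interior point $x \in A'\setminus(C_0\cup C_1)$, any neighborhood $U\subset A'$ of $x$ is already open in $A$; since $f$ is non-wandering on $A$, there exist $y\in U$ and $n>0$ with $f^n(y)\in U$, and automatically $y,f^n(y)\in A'$. Taking closures then yields $\Omega(f|_{A'})\supseteq\overline{A'\setminus(C_0\cup C_1)}=A'$.

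Finally, I apply Theorem \ref{mainthm} to the non-wandering twist map $\hat f$ on $\bT\times[0,1]$ to obtain $\rho(\hat f_0) < \rho(\hat f_1)$. Because $h$ is a homeomorphism fixing each $x$-coordinate, $\rho(\hat f_i)$ coincides with the rotation number of $f|_{C_i}$, so this inequality reads $\rho < \rho$, a contradiction. The main obstacle in the plan is the very first step: confirming that any two disjoint essential invariant curves of a non-wandering twist map can be simultaneously realized as graphs of Lipschitz functions over $\bT$, so that the conjugated restriction genuinely satisfies the twist condition. This relies on a Birkhoff-type graph theorem valid beyond the area-preserving category; the remainder of the argument is then a clean reduction to Theorem \ref{mainthm}.
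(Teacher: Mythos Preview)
Your proof is correct and follows essentially the same route as the paper's: restrict $f$ to the sub-annulus bounded by the two curves and apply Theorem~\ref{mainthm}. Your one stated worry is unfounded in this paper's framework, since Proposition~\ref{Birkhoff} is stated for arbitrary twist maps with no area-preserving or non-wandering hypothesis; the paper's own proof simply invokes it and then proceeds with less detail than you supply on the straightening conjugation and the non-wandering verification.
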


A condition slightly `weaker' than the twist condition is the so-called {\it  boundary twist condition}.
Recall that an orientation-preserving homeomorphism $f$ on $A$ is said to satisfy
the boundary twist condition if $\rho(f_0) <\rho(f_1)$. 
As Example \ref{deg} shows, some twist map does not necessarily satisfy the `weaker' boundary twist condition.
It follows from Theorem \ref{mainthm} that
\begin{coro}\label{btc}
A non-wandering twist map on $A$ always satisfies the boundary twist condition.
\end{coro}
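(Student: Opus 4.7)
The plan is essentially a one-line deduction from Theorem \ref{mainthm}. By the definition recalled in the paragraph preceding the corollary, an orientation-preserving homeomorphism $f$ on $A$ satisfies the boundary twist condition precisely when $\rho(f_0)<\rho(f_1)$. Theorem \ref{mainthm} asserts exactly this strict inequality under the hypotheses of Corollary \ref{btc} (namely, $f$ is a non-wandering twist map). So the proof of the corollary reduces to invoking Theorem \ref{mainthm} and observing that its conclusion is verbatim the boundary twist condition.

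Since there is no real obstacle in the deduction itself, I would use the remaining space to explain why the corollary is not vacuous. Example \ref{deg} in the paper shows that the twist condition alone does \emph{not} imply $\rho(f_0)<\rho(f_1)$: the mode-locking effect can collapse the twist interval to a single point, so a general (wandering) twist map need not satisfy the boundary twist condition. Thus Corollary \ref{btc} records a nontrivial strengthening: adding the non-wandering hypothesis rules out this boundary degeneracy, so within the class of non-wandering maps the twist condition is strictly stronger than the boundary twist condition.

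The main obstacle, consequently, is not in the corollary but in Theorem \ref{mainthm}. If one wished to prove Corollary \ref{btc} directly, the strategy would have to be the same one carried out for the theorem: assume for contradiction that $\rho(f_0)=\rho(f_1)$, use the twist condition to compare the dynamics on the boundaries with the dynamics in a neighborhood of each boundary, and exhibit a wandering open set, contradicting the non-wandering hypothesis. But because Theorem \ref{mainthm} is already available, Corollary \ref{btc} follows by a single invocation with no additional argument required.
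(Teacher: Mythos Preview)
Your proposal is correct and matches the paper's approach exactly: the paper simply says ``It follows from Theorem~\ref{mainthm} that'' before stating Corollary~\ref{btc}, with no further proof given. The deduction is indeed immediate, since the conclusion $\rho(f_0)<\rho(f_1)$ of Theorem~\ref{mainthm} is verbatim the boundary twist condition.
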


\section{Preliminary}\label{prelim}

In this section we introduce some notations and results that will be used later.

\subsection{Non-wandering set}
Let $f$ be a homeomorphism on a compact topological space $X$. A point $x\in X$
is called {\it wandering} if there is an open neighborhood $U$ of $x$
such that $f^n U\cap U=\emptyset$ for each $n\ge 1$.
Let $\Omega(f)$ be the set of points that are not wandering,
which is called the non-wandering set of $f$. Then $f$ is said to be {\it non-wandering}
if $\Omega(f)=X$. A point $x\in X$ is said to be {\it recurrent} if $f^{n_i} x \to x$ for some $n_i \to \infty$. 
Note that if  $f:X\to X$ is non-wandering, then the set of recurrent points are dense in $X$
and $f^n:X\to X$ is non-wandering for any $n\ge 1$.

\subsection{Lifts to the universal cover}
Let $A=\bT\times [0,1]$ be the closed annulus,
$\tA=\bR\times [0, 1]$ be the universal cover of $A$, $\pi_1$ be the projection
from $A$ to $\bT$. We will use the same notation for the projection from $\tA$ to $\bR$.
Let $f: A\to A$ be an orientation-preserving homeomorphism on $A$. 
Then one can lift the map $f$ from $A$ to its universal cover $\tA$.
The lift is unique up to an integer shift $T_k:(x,y)=(x+k, y)$, where $k\in \bZ$.
Let $\tf$ be such a  lift of $f$ to $\tA$.
Let $f_i$ be the projection of the restriction $f$ on $\bT\times \{i\}$ to $\bT$.
That is, $f_i(x)=\pi_1(f(x,i))$, $i=0, 1$.
In the same way we define  the projection $\tf_i$  of the restriction $\tf$ on $\bR\times \{i\}$ to $\bT$, $i=0,1$.

\subsection{Rotation numbers of circle homeomorphisms}
Let $g$ be an orientation-preserving homeomorphism on $\bT$,
$\tg$ be a lift of $g$ from $\bT$ to $\bR$.
Poincar\'e proved that the limit $\ds \lim_{n\to\infty}\frac{\tg^n(\tx) -\tx}{n}$
exists and is independent of the choices of $\tx\in\bR$.
Denote the limit by $\rho(\tg)$, which will be called the rotation number of $g$.
A different choice of the lift $\tg$ of $g$ results in an integer shift of
the rotation number.

It follows from the definition of rotation numbers that if the lifts of
two circle homeomorphism $g_1$ and $g_2$
satisfy $\tg_1(\tx)\le \tg_2(\tx)$ for each $\tx\in\bR$, then  
$\rho(\tg_1)\le \rho(\tg_2)$. However, a stronger condition  $\tg_1(\tx)< \tg_2(\tx)$ for each $\tx\in\bR$
does {\it not} necessarily lead to the stronger result that $\rho(\tg_1)< \rho(\tg_2)$.
\begin{pro}\label{irrational}
Assume  $\tg_1(\tx)< \tg_2(\tx)$ for each $\tx\in\bR$. If $\rho(\tg_1)$ is irrational,
then  $\rho(\tg_1)< \rho(\tg_2)$.
\end{pro}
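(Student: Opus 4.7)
I would argue by contradiction, supposing $\rho(\tg_1)=\rho(\tg_2)=\alpha$ with $\alpha$ irrational (the weak inequality $\rho(\tg_1)\le\rho(\tg_2)$ stated earlier in the section kills the other possibility). The first move is to upgrade the pointwise strict inequality $\tg_2>\tg_1$ to a uniform gap: since $\tg_2-\tg_1$ is continuous, strictly positive and $1$-periodic on $\bR$, compactness yields $\delta>0$ with $\tg_2(\tx)\ge \tg_1(\tx)+\delta$ for every $\tx\in\bR$. A short induction on $n$, using only that $\tg_1$ is monotone, then gives
\[
\tg_2^n(\tx)\;\ge\;\tg_1^n(\tx)+\delta\qquad\text{for all }n\ge 1,\ \tx\in\bR.
\]
Note that this buys only an additive constant $\delta$, not $n\delta$, so dividing by $n$ recovers only the weak inequality we started with and we need more.

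The additional ingredient is the irrationality of $\alpha$. By density of $\{q\alpha\bmod 1:q\in\bN\}$ in $[0,1)$, I can pick $q\in\bN$ and $p=\lceil q\alpha\rceil$ with $\alpha<p/q$ and $p-q\alpha<\delta$. I would then invoke the standard fact that the rotation number of any lift of a circle homeomorphism lies between the infimum and supremum of its single-step displacement; applied to the lift $\tg_1^q$ (whose rotation number is $q\alpha$) this yields $\sup_\tx(\tg_1^q(\tx)-\tx)\ge q\alpha$.

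Combining these two ingredients with the iterated inequality above gives
\[
\sup_\tx\bigl(\tg_2^q(\tx)-\tx\bigr)\;\ge\;\sup_\tx\bigl(\tg_1^q(\tx)-\tx\bigr)+\delta\;\ge\;q\alpha+\delta\;>\;p,
\]
so there is some $\tx^*\in\bR$ with $\tg_2^q(\tx^*)>\tx^*+p$. Using that $\tg_2^q$ is monotone and satisfies $\tg_2^q(y+k)=\tg_2^q(y)+k$ for every integer $k$, a one-line induction on $n$ delivers $\tg_2^{nq}(\tx^*)>\tx^*+np$ for every $n\ge 1$, which forces $\rho(\tg_2)\ge p/q>\alpha$ and contradicts $\rho(\tg_2)=\alpha$.

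The crux, and the only place where the irrationality of $\rho(\tg_1)$ is actually used, is choosing a rational $p/q>\alpha$ with $p-q\alpha<\delta$: if $\alpha=a/b$ were rational, the positive gap $p-q\alpha$ would be bounded below by $1/b$ for every such $p/q$, the chain of inequalities comparing $\sup_\tx(\tg_2^q(\tx)-\tx)$ with $p$ would fail for small $\delta$, and the argument would (correctly) collapse---matching the mode-locking phenomenon mentioned earlier in the paper.
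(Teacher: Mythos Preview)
Your argument is correct. The uniform gap $\delta$ via periodicity and compactness, the inductive inequality $\tg_2^n\ge\tg_1^n+\delta$, the choice of $p/q>\alpha$ with $p-q\alpha<\delta$ from equidistribution of $\{q\alpha\}$, the displacement bound $\sup_\tx(\tg_1^q(\tx)-\tx)\ge q\alpha$, and the final monotone bootstrap to $\tg_2^{nq}(\tx^*)>\tx^*+np$ all hold as written. One cosmetic remark: the argument is really a direct proof that $\rho(\tg_2)\ge p/q>\alpha$; the contradiction wrapper is unnecessary.

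As for comparison with the paper: the paper does not supply a proof of this proposition at all---it merely cites \cite[Chapter~1]{dMvS} and \cite[Proposition~11.1.9]{KH95}. Your argument is essentially the standard one found in those references, so there is nothing to contrast.
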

See  \cite[Chapter 1]{dMvS} or \cite[Proposition 11.1.9]{KH95} for proofs of this result.

\subsection{Rotation sets of annulus maps}
Next we define the rotation set of a map $f$ on the annulus $A$.
For  a general point $(x,y)\in A$, we lift it to some point $(\tx, y)\in \tA$,
and denote $(\tx_n,y_n)=\tf^n(\tx, y)$. Then we define the
lower and upper rotation numbers of $(x,y)$ under $f$ as
$\ds \rho^\ast(x,y,f):=\limsup_{n\to\infty}\frac{\tx_n- \tx}{n}$,
$\ds \rho_\ast(x,y,f):=\liminf_{n\to\infty}\frac{\tx_n- \tx}{n}$.
The two limits coincide for $\mu$-a.e. $x\in A$ for every $f$-invariant probability measure $\mu$. 
Denote the common value by $\rho(x,y,f)$.
More generally,  the rotation set of $f$ on $A$ is defined by
\begin{align}\label{defset}
I_f=\Big\{\rho\in \bR: \frac{\pi_1(\tf^{n_i}(\tx_i,y_i)) -\tx_i}{n_i}\to \rho
\text{ for some }(\tx_i,y_i)\in \tA, n_i\to\infty\Big\}.
\end{align}
Note that $I_f$ is always closed.  See \cite{MZ91} for  a detailed discussion 
of rotation sets.

\subsection{Birkhoff's theorem on invariant curves}
Let $f$ be a twist map on $A$. An {\it invariant curve} of $f$ is an invariant 
circle in $A$ that goes around the annulus (hence not null-homotopic in $A$).
\begin{pro}\label{Birkhoff}
Let $f$ be a twist map on $A$. Then there exists a constant $L(f)>0$
such that any invariant curve of $f$ is the graph of some Lipschitz 
continuous function whose Lipschitz constant  is bounded by $L(f)$.
\end{pro}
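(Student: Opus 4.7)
The plan is to extract a uniform twist modulus from the compactness of $A$, use it together with the analogous modulus for $\tf^{-1}$ to show any invariant curve is a graph, and finally convert these moduli into a slope bound.

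Write a lift as $\tf(\tx, y) = (F(\tx, y), G(\tx, y))$. The positive twist condition makes $F(\tx, \cdot)$ strictly increasing on $[0,1]$ for every $\tx$; continuity of $F$ together with $\bZ$-periodicity in $\tx$ then yields, by a standard compactness argument, an increasing function $\omega : [0,1] \to [0,\infty)$ with $\omega(0) = 0$ satisfying
$$F(\tx, y_2) - F(\tx, y_1) \ge \omega(y_2 - y_1), \qquad \tx \in \bR,\ 0 \le y_1 \le y_2 \le 1.$$
The inverse $\tf^{-1}$ is a negative-twist homeomorphism, and the same argument produces a dual modulus $\omega_-$ for $F_{-1} := \pi_1 \circ \tf^{-1}$. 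Compactness also supplies uniform moduli of continuity $\eta, \eta_-$ in the first variable for $F, F_{-1}$.

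To show an invariant curve $\gamma$ is a graph, lift it to $\tilde\gamma \subset \tA$ and suppose for contradiction that two heights $y_1 < y_2$ satisfy $(\tx_0, y_1), (\tx_0, y_2) \in \tilde\gamma$. The image pair $\tf(\tx_0, y_j)$ lies on $\tilde\gamma$ with $x$-coordinates differing by at least $\omega(y_2 - y_1) > 0$, while $\tf^{-1}(\tx_0, y_j)$ separates them in the opposite direction by at least $\omega_-(y_2 - y_1)$. Tracing the arcs of $\tilde\gamma$ joining these image pairs and iterating forward and backward forces the degree-one projection $\pi_1|_\gamma : \gamma \to \bT$ to fail, by producing an accumulating family of vertical chords of uniform minimal length; this is Birkhoff's classical connecting-arc argument.

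Once $\gamma = \{(x, \phi(x)) : x \in \bT\}$ is known to be a continuous graph, fix $\tx_1 < \tx_2$ with $\phi(\tx_2) > \phi(\tx_1)$. Orientation preservation of $\tf^{-1}|_{\tilde\gamma}$ gives $v_1 < v_2$, where $v_i := F_{-1}(\tx_i, \phi(\tx_i))$. Splitting
$$v_2 - v_1 = [F_{-1}(\tx_2, \phi(\tx_2)) - F_{-1}(\tx_2, \phi(\tx_1))] + [F_{-1}(\tx_2, \phi(\tx_1)) - F_{-1}(\tx_1, \phi(\tx_1))],$$
the first bracket is $\le -\omega_-(\phi(\tx_2) - \phi(\tx_1))$ by negative twist and the second is $\le \eta_-(\tx_2 - \tx_1)$ by uniform continuity, so positivity of $v_2 - v_1$ forces $\omega_-(\phi(\tx_2) - \phi(\tx_1)) \le \eta_-(\tx_2 - \tx_1)$ and, on small scales, the desired Lipschitz bound $L(f)$. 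The opposite sign case $\phi(\tx_2) < \phi(\tx_1)$ uses $\tf, \omega, \eta$ symmetrically. The main obstacle is the middle step: converting the pointwise twist spread into a genuine topological obstruction, without any a priori regularity of $\gamma$, is the technical heart of Birkhoff's theorem.
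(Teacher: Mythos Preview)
The paper does not give its own proof of this proposition; it simply records the statement and cites Birkhoff's original paper and \cite[Lemma 13.1.1]{KH95}. So there is no in-paper argument to compare against, and your sketch is in fact the standard outline found in those references.

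Two comments on the sketch itself. First, the step you flag as ``the technical heart'' is indeed where all the content lies, and your description of it is not the actual argument. Birkhoff's proof does not iterate forward and backward to accumulate vertical chords; instead one considers the two complementary components $U_0,U_1$ of $A\setminus\gamma$ containing the boundary circles, shows (using invariance and the positive twist) that the frontier $\partial U_0$ is accessible from below and hence a graph, does the symmetric thing for $\partial U_1$ using $f^{-1}$, and concludes $\gamma=\partial U_0=\partial U_1$. Your ``connecting-arc'' sentence does not capture this, and as written there is no argument.

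Second, your last paragraph only yields
\[
\omega_-\big(\phi(\tx_2)-\phi(\tx_1)\big)\le \eta_-\big(\tx_2-\tx_1\big),
\]
which is a modulus-of-continuity bound, not a Lipschitz bound. To upgrade it you need $\omega_-(t)\ge ct$ and $\eta_-(t)\le Ct$ near $t=0$, i.e.\ that $f$ is bi-Lipschitz and the twist is bounded below by a positive constant in the differential sense. The paper's stated hypothesis (an orientation-preserving \emph{homeomorphism} with strictly monotone $y\mapsto x_1(x,y)$) does not supply this; the cited sources assume at least $C^1$ regularity. If you want the Lipschitz conclusion, you must add that hypothesis explicitly rather than hide it behind the phrase ``on small scales.''
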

For a proof of Birkhoff's theorem, see \cite{Birk}, or \cite[Lemma 13.1.1]{KH95}.

\section{Twist interval of twist maps}\label{main}

Let $A=\bT\times [0,1]$ be the annulus,
$f: A\to A$ be an orientation-preserving homeomorphism
that satisfies the twist condition. In the following we will simply say that $f$ is a {\it twist map}. 
Let $f_i$ be the projection of the restriction of $f$ on the boundary $\bT\times\{i\}$,
and $\rho_i:=\rho(f_i)$ be the rotation number of $f_i$, $i,=0, 1$, via some lift $\tf$.
A different choice of the lift $\tf$ results in a shift of $\rho_0$ and $\rho_1$ by the same integer.
We make the following convention:

\noindent {\bf Convention.} We always pick the lift $\tf$ of $f$ that satisfies $\rho(\tf_0)\in[0,1)$.

\subsection{The rotation set of twist maps}
In the following we will call $[\rho_0,\rho_1]$ the twist interval
of $f$.
Let $I_f$ be the rotation set of $f$ on $A$. See Section \ref{prelim}
for the definition of these quantities.

\begin{lem}
Let $f: A\to A$ be a  twist map. 
Then the rotation set  of $f$ satisfies $I_f\subset  [\rho_0,\rho_1]$.
\end{lem}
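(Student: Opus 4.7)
The plan is to derive, by induction on $n$, the pointwise sandwich
$$\tf_0^n(\tx)\;\le\;\pi_1(\tf^n(\tx,y))\;\le\;\tf_1^n(\tx)\qquad \text{for all } (\tx,y)\in \tA,\ n\ge 1,$$
and then divide by $n$ and pass to the limit, using the standard uniform convergence of iterates of a circle homeomorphism to its rotation number.

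For the base case $n=1$, lifting the twist condition to $\tf$ gives that $y\mapsto \pi_1(\tf(\tx,y))$ is strictly increasing for each fixed $\tx\in\bR$. Evaluating at $y=0$ and $y=1$ yields $\tf_0(\tx)\le \pi_1(\tf(\tx,y))\le \tf_1(\tx)$. For the inductive step, write $(\tx_n,y_n)=\tf^n(\tx,y)$ and apply the base case at $(\tx_n,y_n)$ to obtain $\tf_0(\tx_n)\le \pi_1(\tf^{n+1}(\tx,y))\le \tf_1(\tx_n)$. Since $\tf_0$ and $\tf_1$ are lifts of circle homeomorphisms, they are monotone increasing on $\bR$; combining this monotonicity with the induction hypothesis $\tf_0^n(\tx)\le \tx_n\le \tf_1^n(\tx)$ gives $\tf_0^{n+1}(\tx)\le \tf_0(\tx_n)$ and $\tf_1(\tx_n)\le \tf_1^{n+1}(\tx)$, which closes the induction.

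Dividing the sandwich by $n$ yields, for every $(\tx,y)\in\tA$,
$$\frac{\tf_0^n(\tx)-\tx}{n}\;\le\;\frac{\pi_1(\tf^n(\tx,y))-\tx}{n}\;\le\;\frac{\tf_1^n(\tx)-\tx}{n}.$$
Now take any $\rho\in I_f$ with witnessing sequences $(\tx_i,y_i)\in\tA$ and $n_i\to\infty$. To conclude $\rho\in[\rho_0,\rho_1]$ I invoke the classical fact that for an orientation-preserving circle homeomorphism $g$ with lift $\tg$, the quantity $\tg^n(\tx)-\tx-n\rho(\tg)$ is bounded uniformly in both $\tx$ and $n$; hence $\frac{\tf_i^{n_i}(\tx_i)-\tx_i}{n_i}\to \rho_i$ for $i=0,1$ regardless of how the base points $\tx_i$ vary. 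The sandwich then passes to the limit and gives $\rho_0\le\rho\le\rho_1$.

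The only delicate point is precisely this last uniformity: the base points $\tx_i$ in the definition \eqref{defset} of $I_f$ are allowed to depend on $i$, so a pointwise statement $\frac{\tf_i^n(\tx)-\tx}{n}\to\rho_i$ for each fixed $\tx$ would not suffice. The bounded-deviation property of circle homeomorphisms (a standard consequence of the proof of Poincar\'e's theorem, see \cite[Chapter 1]{dMvS}) removes this obstacle, so I do not expect any real difficulty beyond carefully setting up the induction and invoking uniformity at the end.
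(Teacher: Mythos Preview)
Your proof is correct and follows essentially the same approach as the paper's: both establish the sandwich $\tf_0^n(\tx)\le \pi_1(\tf^n(\tx,y))\le \tf_1^n(\tx)$ by the same induction (twist condition plus monotonicity of $\tf_0,\tf_1$) and then pass to the limit. You are in fact slightly more explicit than the paper about the uniformity in the base point needed when $\tx_i$ varies with $i$, which the paper uses tacitly.
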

\begin{proof}
Let $\rho\in I_f$. Then according to \eqref{defset},
 $\ds \rho=\lim_{i\to\infty}\frac{\pi_1(\tf^{n_i}(\tx_i,y_i)) -\tx_i}{n_i}\in I_f$
for some  $(\tx_i,y_i)\in \tA$ and $n_i \to \infty$.
Let us fix the index $i$ for now. 

Let $(\tx_{i,n},y_{i,n})=\tf^n(\tx_i, y_i)$ be the $n$-th iterate of $(\tx_i, y_i)$,
and $(\tx_{i,n}',0)=\tf^n(\tx_i,0)$ be a comparison orbit.
We claim that $\tx_{i,n} \ge \tx_{i,n}'$ for each $n\ge 1$.
\begin{proof}[Proof of the claim]
By the twist condition, we have $\tx_{i,1} \ge \tx_{i,1}'$. 
Assume $\tx_{i,k} \ge \tx_{i,k}'$ for each $1\le k \le n$.
Then for $k=n+1$, we have 
\begin{align*}
\tx_{i,n+1}=\pi_1(\tf(\tx_{i,n},y_{i,n})) \ge \pi_1(\tf(\tx_{i,n},0))
=\tf_0(\tx_{i,n}) \ge \tf_0(\tx_{i,n}')=\tx_{i,n+1}',
\end{align*}
since $\tf_0$ preserves the order of the points.
Therefore, $\tx_{i,n} \ge \tx_{i,n}'$ for each $n\ge 1$.
\end{proof}
It follows from the above claim that 
$\ds \frac{\pi_1(\tf^{n}(\tx_i,y_i)) -\tx_i}{n} \ge \frac{\pi_1(\tf^{n}(\tx_i,0)) -\tx_i}{n}$
for any $n\ge 1$. Setting $n=n_i$ and then letting $i\to\infty$, we see that
$\rho \ge \rho_0$. 
In the same way we have $\rho \le \rho_1$. This holds for any $\rho\in I_f$.
Therefore, $I_f\subset [\rho_0,\rho_1]$.
\end{proof}
Without some extra assumption of $f$, it is possible that 
the rotation set $I_f \subsetneq [\rho_0,\rho_1]$.
See \S \ref{exam} for examples of twist maps with $I_f= \{\rho_0,\rho_1\}$.

By our twist condition, we know that $\tf_0(\tx)<\tf_1(\tx)$ for any $\tx\in \bR$.
It follows from the definition of rotation numbers that $\rho_0\le \rho_1$.
This inequality may not necessarily be a strict one.
See Example \ref{deg}, where a twist map has a degenerate twist interval.

\subsection{Non-wandering twist maps}
In this subsection we consider the case when $f$ is non-wandering.
Recall that if $f$ is non-wandering, so is $f^n$ for each $n\ge 1$.
\begin{thm}\label{rho-interval}
Let $f: A\to A$ be a  twist map. 
If $f$ is non-wandering, 
then $\rho_0< \rho_1$.
\end{thm}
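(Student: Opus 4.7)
The plan is to argue by contradiction from $\rho_0 = \rho_1 =: \rho$. If $\rho$ is irrational, Proposition~\ref{irrational} applied to the boundary twist inequality $\tilde{f}_0(\tilde{x}) < \tilde{f}_1(\tilde{x})$ (a consequence of the twist condition at $y=0$ and $y=1$) immediately yields $\rho_0 < \rho_1$, so the real work lies in the rational case $\rho = p/q$ with $\gcd(p,q) = 1$.

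For the rational case I pass to the $q$-th iterate $g := f^q$ with lift $\tilde{g} := T_{-p} \circ \tilde{f}^q$, so that $\tilde{g}_0 = \tilde{f}_0^q - p$ and $\tilde{g}_1 = \tilde{f}_1^q - p$ both have rotation number $0$, and $g$ is still non-wandering. A short induction on $n$, using that $y \mapsto \pi_1 \tilde{f}(\tilde{x}, y)$ is strictly increasing (twist) and that each $\tilde{f}_i$ is order-preserving on $\bR$, upgrades the one-step twist to the sandwich
\begin{equation*}
\tilde{f}_0^n(\tilde{x}) \;<\; \pi_1 \tilde{f}^n(\tilde{x}, y) \;<\; \tilde{f}_1^n(\tilde{x}), \qquad n \ge 1,\ \ y \in (0, 1).
\end{equation*}
Specializing to $n = qk$ shows $\pi_1 \tilde{g}^k(\tilde{x}, y) \in (\tilde{g}_0^k(\tilde{x}), \tilde{g}_1^k(\tilde{x}))$, and since $\rho(\tilde{g}_i) = 0$ each orbit $\{\tilde{g}_i^k(\tilde{x})\}_k$ is monotone with a finite limit in $\bR$, so every forward $\tilde{g}$-orbit in the interior is uniformly bounded horizontally.

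Non-wandering enters through the density of recurrent points of $g$ in the interior of $A$. For any interior recurrent $(x,y)$ with lift $(\tilde{x}, y) \in \tilde{A}$, the return $g^{n_i}(x,y) \to (x,y)$ lifts, after passing to a subsequence, to $\tilde{g}^{n_i}(\tilde{x}, y) \to T_k(\tilde{x}, y)$ for some integer $k$; iterating the subsequence produces $T_{mk}(\tilde{x},y)$ in the orbit closure for every $m \in \bZ$, and boundedness of the orbit forces $k = 0$. Thus $(\tilde{x}, y)$ is $\tilde{g}$-recurrent; passing to the limit in the sandwich at time $n_i$ and comparing $\pi_1\tilde{g}^{n_i}(\tilde{x},y) \to \tilde{x}$ with the monotone limits of $\tilde{g}_0^n(\tilde{x})$ and $\tilde{g}_1^n(\tilde{x})$ rules out $\tilde{g}_0(\tilde{x}) > \tilde{x}$ and $\tilde{g}_1(\tilde{x}) < \tilde{x}$. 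Density of such $\tilde{x}$ in $\bR$ and closedness of the pointwise conditions then give $\tilde{g}_0 \le \mathrm{id} \le \tilde{g}_1$ globally.

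The main obstacle is to convert these weak inequalities, combined with the strict twist $\tilde{g}_0 < \tilde{g}_1$, into a contradiction. My target is to strengthen to $\tilde{g}_0 = \mathrm{id} = \tilde{g}_1$ (which immediately violates the strict twist): if $\tilde{g}_0(\tilde{x}_0) < \tilde{x}_0$ at some $\tilde{x}_0$, I aim to show that a small neighborhood of $(x_0, 0)$ in $A$ drifts leftward under all iterates of $g$ and fails to return, producing a wandering point and contradicting non-wandering. The delicate issue is maintaining uniform control of the interior $y$-coordinate across iterates so that the horizontal drift really prevents return; I expect this to require combining continuity near the boundary with a Brouwer-type plane translation argument on the open interior strip.
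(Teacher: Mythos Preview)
Your reduction to the rational case and the passage to $g=f^q$ with lift $\tilde g=T_{-p}\circ\tilde f^q$ is fine, and the sandwich $\tilde g_0^k(\tilde x)<\pi_1\tilde g^k(\tilde x,y)<\tilde g_1^k(\tilde x)$ together with the boundedness/$k=0$ argument is correct and yields the clean intermediate conclusion $\tilde g_0\le\mathrm{id}\le\tilde g_1$. The gap is the last step. You propose to upgrade to $\tilde g_0=\mathrm{id}=\tilde g_1$ by showing that if $\tilde g_0(\tilde x_0)<\tilde x_0$ then a small half-neighborhood of $(x_0,0)$ is wandering. But nothing you have proved controls the $y$-coordinate of nearby interior points under iteration: a point $(\tilde x_0,\epsilon)$ can drift upward, after which the inequality $\tilde g_0\le\mathrm{id}$ at the bottom boundary says nothing about its horizontal position, and the upper bound $\tilde g_1^k(\tilde x_0)\ge\tilde x_0$ leaves ample room for return. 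A Brouwer argument on the open strip does not rescue this, since $\tilde g$ does have fixed points (on both boundaries), and the difficulty is precisely at the boundary where Brouwer is unavailable. As written, the proposal stalls here.

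The paper's proof takes a different and more geometric route that sidesteps the $y$-control problem entirely. In the case $\rho_0=0$ it fixes a point $\tilde x_0$ with $\tilde f_0(\tilde x_0)=\tilde x_0$, considers the vertical segment $L_0=\{\tilde x_0\}\times[0,1]$, and uses twist to see that $\tilde f^{n+1}L_0$ lies strictly to the right of $\tilde f^nL_0$ except at the common endpoint $(\tilde x_0,0)$; hence the ``triangular'' domains $\tilde f^n(\tilde D_0)$ are pairwise disjoint in $\tilde A$. Non-wandering of $f$ on $A$ then forces some $\tilde f^nL_0$ to cross $L_0+(1,0)$, and since both endpoints of $\tilde f^nL_0$ remain to the left of $\{\tilde x_0+1\}\times[0,1]$ (they are trapped by the fixed points $\tilde x_0$ and $\tilde x_1\le\tilde x_0+1$ of $\tilde f_0,\tilde f_1$), the crossing is at least double. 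This produces a topological horseshoe and in particular an $f^n$-fixed point of rotation number $1/n$, contradicting $I_f=\{0\}$. The general case $\rho_0=p/q$ is reduced to this one as you do. Note that the contradiction is obtained not by forcing $\tilde g_0=\mathrm{id}$ but by exhibiting a periodic orbit with the \emph{wrong} rotation number; this is what makes the argument go through without any control of the vertical dynamics.
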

\begin{proof}
We will assume $\rho_0= \rho_1$ and derive a contradiction from it.

\noindent{\bf Case 1.} 
$\rho_0$ is irrational. The twist condition implies 
$f_0(x)<f_1(x)$ for any $x\in \bT$. 
Then Proposition \ref{irrational} states $\rho_0 <\rho_1$, a contradiction.

\noindent{\bf Case 2.}  $\rho_0=p/q$ is a rational number. We start with the special case
$\rho_0=0$ and then extend our proof to the general case.

\noindent{\bf Case 2a.} $\rho_0=0$. It means $f_i$ admits some fixed point for each $i=0,1$.
By our choice of the lift $\tf$, we see that $\tf_i$ also admits some fixed point, $i=0,1$.
Let $\tx_0\in \bR$ be a fixed point of $\tf_0$
and $\tx_1\in(\tx_0, \tx_0+1]$ be the corresponding fixed point of $\tf_1$.
Then $(\tx_i,i)\in \tA$, $i=0,1$ are two fixed points of $\tf$. 
By the twisting condition, $\tf_{1}(\tx_0) > \tf_0(\tx_0)$. Moreover, since $\tf_i$ is orientation-preserving, 
we see that $\tf_1^n(\tx_0)\in(\tx_0, \tx_1)\subset \bR$ for any $n\ge 1$.

Consider the vertical segment $L_{0}=\{\tx_0\}\times [0,1]$, and let $L_1:=\tf(L_0)$ under $\tf$ be its image.
Applying the twist condition again, we see that $L_0$ and $L_1$ intersect only at $(\tx_0,0)$,
and they bound a triangular domain $\tD_0\subset \tA$.
Note that 
$\tf^{n+1}L_0\cap \tf^nL_0=\tf^n(\tf(L_0)\cap L_0)=\tf^n\{(\tx_0,0)\}=\{(\tx_0,0)\}$ for any $n\ge 1$.
The orientation-preserving assumption of $f$ implies 
that $\tf^{n+1}L_0$ lies on the right hand side of $\tf^nL_0$,
and the domain bounded by them is exactly $\tf^n(\tD_0)$.
So $\tf^n(\tD_0)$, $n\ge 0$, are mutually disjoint, and lie on the right side of $L_0$.
Therefore, $\tD_0$ is a wandering domain with respect to $\tf$ on $\tA$.

Let $D_0$ be the projection of $\tD_0$ on $A$. Since $f$ is non-wandering,
$f^n(D_0)\cap D_0\neq\emptyset$ for some $n\ge 1$.
Lifting to $\tA$, we see that  $\tf^n L_0$ has to reach to the right side to
the shifted segment $L_0+(1,0)=\{\tx_0+1\}\times [0,1]$.
Then $\tf^n L_0$ crosses $L_0+(1,0)$ at least twice 
since the two endpoints of $\tf^n L_0$ are kept on the left side of the 
the vertical segment $\{\tx_1\}\times [0,1]$, and $\tx_1\le \tx_0+1$. 
Projecting this structure from $\tA$ to $A$, we see that
there exists a topological horseshoe $\Lambda\subset A$ that is invariant under $f^n$.
Let $w\in \Lambda$ be a point fixed by $f^n$.
Then the lift $\tw\in \tA$ of $w$ satisfies $\tf^n (\tw)= \tw+(1,0)$, and hence $\rho(\tw, \tf)=1/n$. 
It contradicts the hypothesis that $I_f=\{0\}$.

\noindent{\bf Case 2b.} Now we deal with the general case that $\rho_0=p/q$. 
Our argument is similar to Case 2a, with not one, but $q$ moving screens.
Let $x_0$ be a periodic point $f_0$, $\tx_0$ be a lift of $x_0$. Then 
$\tf^{nq}_0(\tx_0)=\tx_0+np$ for any integer $n$.
Let  $\tx_1\in(\tx_0, \tx_0+1]$ be the corresponding periodic point of $\tf_1$.
We see that $\tf_1^{qn}(\tx_0)\in(\tx_0+np, \tx_1+np)\subset \bR$ for any $n\ge 1$.
On $\tA$, the lift $\tf$  satisfies $\tf^q(\tx_0,0)=(\tx_0+p,0)$.
Let $L_k=\{\tf_0^k\tx_0\}\times [0,1]$ for each $k \ge 0$.
Then $\tf(L_k)$ lies on the right side of $L_{k+1}$ by the twist condition.
Let  $\tD_k$ be the domain bounded by $\tf(L_k)$ and $L_{k+1}$, $0\le k\le q-1$.
Then the domain bounded by $\tf^q(L_0)$ and $L_{q}=\{\tx_0+p\}\times[0,1]$ 
is $\tU:=\tD_{q-1}\cup \tf(\tD_{q-2})\cup\cdots \cup \tf^{q-1}(\tD_0)$.

Let $\tg(\tx,y)=\tf^q(\tx,y)-(p,0)$. Then the domain bounded by $L_0$ and $\tg(L_0)$
is exactly $\tU-(p,0)$. Therefore, $\tU-(p,q)$ is a wandering domain with respect to $\tg$.
The projection $g$ of $\tg$ satisfies $g(x,y)=f^q(x,y)$, and hence is non-wandering.
Using the same argument as in Case 2a, we see that there exists a $g$-periodic point 
$w\in A$ with $\rho(\tw,\tg)>0$.
On the other hand, $I_g=q\cdot I_f -p=\{0\}$, which leads to a contradiction.
This completes the proof.
\end{proof}

We consider a special case when $f$ preserves a fully supported measure.
Let $\mu$ be a probability measure on $A$ that is fully supported. 
That is, $\mu(U)>0$ for any nonempty open set $U\subset A$.
\begin{coro}\label{full}
If a  twist map $f: A\to A$ 
preserves a fully supported measure, 
then $\rho_0< \rho_1$.
\end{coro}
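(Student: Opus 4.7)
The plan is to reduce Corollary \ref{full} immediately to Theorem \ref{rho-interval} by showing that the existence of a fully supported invariant probability measure forces $f$ to be non-wandering. This is a classical consequence of the Poincar\'e recurrence theorem, so the deduction should be short and the main content of the corollary will come from the theorem already proved.

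First I would fix an arbitrary point $x\in A$ and an arbitrary open neighborhood $U$ of $x$, and argue that $U$ is non-wandering. Since $\mu$ is fully supported, $\mu(U)>0$. By Poincar\'e recurrence applied to the $f$-invariant probability measure $\mu$, the set of points in $U$ that return to $U$ infinitely often under iteration by $f$ has full $\mu$-measure in $U$; in particular it is nonempty. Choosing such a point $y\in U$ and an integer $n\ge 1$ with $f^n(y)\in U$, we obtain $f^n(U)\cap U\neq\emptyset$. Hence $x$ is non-wandering, and since $x$ was arbitrary, $\Omega(f)=A$.

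Once $f$ is known to be non-wandering, Theorem \ref{rho-interval} applies directly and yields $\rho_0<\rho_1$, which is the desired conclusion.

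There is no real obstacle here; the only thing to be slightly careful about is the order of quantifiers in the definition of non-wandering (one must verify the condition for \emph{every} neighborhood of every point, not just for one neighborhood), but this is immediate because full support gives $\mu(U)>0$ for every nonempty open $U$, so Poincar\'e recurrence produces a return for every such $U$ simultaneously.
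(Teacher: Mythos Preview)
Your proposal is correct and matches the paper's approach: the paper states Corollary~\ref{full} as an immediate consequence of Theorem~\ref{rho-interval} without giving a separate proof, implicitly relying on the standard fact (which you spell out via Poincar\'e recurrence) that a fully supported invariant probability measure forces $\Omega(f)=A$. Your argument is exactly the expected one-line deduction.
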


\vskip.1in

\begin{remark}
Let $f$ be a non-wandering twist map. Then Theorem \ref{rho-interval} shows 
$\rho_0<\rho_1$.
Combining with Franks' generalized Poincar\'e--Birkhoff Theorem
\cite{Fra88}, we see that $I_f\supseteq \bQ\cap [\rho_0, \rho_1]$.
Therefore $I_f=[\rho_0, \rho_1]$ since $I_f$ is closed.
Note that Franks proved much stronger results  in \cite{Fra88} than what we need here.
\end{remark}

The following is a direct corollary of Theorem \ref{rho-interval}.
\begin{coro}\label{disjoint}
Let $f$ be a non-wandering twist map on $A$.
Then any two disjoint invariant curves of $f$ have different rotation numbers.
\end{coro}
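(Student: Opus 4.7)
The plan is to reduce the statement to Theorem \ref{rho-interval} applied to the closed sub-annulus bounded by the two invariant curves.

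First, I would invoke Proposition \ref{Birkhoff} to represent each invariant curve $C_i$ as the graph of a Lipschitz function $c_i: \bT \to [0,1]$. Since the curves are disjoint and both continuous, after relabeling we may assume $c_1(x) < c_2(x)$ for every $x \in \bT$, so that
\[
A' = \{(x,y)\in A : c_1(x) \le y \le c_2(x)\}
\]
is a closed $f$-invariant sub-annulus of $A$ with non-empty interior. The straight-line homeomorphism $\phi: \bT \times [0,1] \to A'$ defined by $\phi(x,t) = (x, c_1(x) + t(c_2(x) - c_1(x)))$ preserves the first coordinate, so the conjugate $\hat f := \phi^{-1} \circ f|_{A'} \circ \phi$ is again a twist map on the standard annulus, and its boundary rotation numbers on $\bT \times \{0\}$ and $\bT \times \{1\}$ coincide with the rotation numbers of $f$ on $C_1$ and $C_2$, respectively.

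The main step is then to check that $\hat f$ is non-wandering, so that Theorem \ref{rho-interval} applies directly to $\hat f$. For a point of $\bT \times (0,1)$ this is immediate: $\phi$ carries a small neighborhood to an open subset of $\mathrm{int}(A')$, which is open in $A$, and non-wandering of $f$ on $A$ furnishes the required return. For a boundary point, say $(x_0, 0)$, any neighborhood $U$ in $\bT \times [0,1]$ contains a half-disc $B \times [0, \eps)$; then $W := \phi(B \times (0, \eps))$ is an open subset of $\mathrm{int}(A')$, hence open in $A$, and non-wandering of $f$ applied to $W$ yields some $n \ge 1$ with $f^n W \cap W \neq \emptyset$. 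Pulling back by $\phi$, this gives $\hat f^n U \cap U \neq \emptyset$, so $\hat f$ is non-wandering at $(x_0, 0)$ as well.

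Once $\hat f$ is known to be a non-wandering twist map, Theorem \ref{rho-interval} gives $\rho_0(\hat f) < \rho_1(\hat f)$, which is precisely the statement that $C_1$ and $C_2$ have distinct rotation numbers. The main potential obstacle is the verification of non-wandering of $\hat f$ at boundary points, since non-wandering of $f$ on $A$ does not a priori transfer to $f|_{A'}$ at points of $C_1 \cup C_2$; the resolution is that one may always retreat into the open interior of the sub-annulus, which remains open in $A$.
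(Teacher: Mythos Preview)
Your proposal is correct and follows essentially the same route as the paper: restrict $f$ to the sub-annulus between the two graphs given by Proposition~\ref{Birkhoff}, observe that this restriction is again a non-wandering twist map, and invoke Theorem~\ref{rho-interval}. The paper's proof simply asserts that $f|_{A'}$ is non-wandering, whereas you supply the justification (retreating into the interior of $A'$, which is open in $A$) and make the conjugacy to the standard annulus explicit; these are welcome details but do not change the strategy.
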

\begin{proof}
Let $C_1$ and $C_2$ be two invariant curves of $f$ that are disjoint. 
Proposition \ref{Birkhoff} states that each $C_i$ is the graph of some continuous (in fact Lipschitz) function 
$\phi_i:\bT\to [0,1]$. Since $C_1$ and $C_2$ are disjoint,
we assume $\phi_1(x)<\phi_2(x)$ for any $x\in \bT$.
Then the region $A'$ between $C_1$ and $C_2$ is a smaller annulus
and the restriction $f|_{A'}$ is a twist map that is also non-wandering. 
Then we can apply Theorem \ref{rho-interval} 
and conclude that the two rotation numbers of $f$ on $C_1$ and $C_2$
are different.
\end{proof}

\vskip.1in

\begin{remark}
Note that for any twist map, there is {\it at most one} (disjoint or not)
invariant curve with rotation number $\rho$ {\it if $\rho\in I_f$ is irrational}.
See  \cite{Mat85} or \cite[Theorem 13.2.9]{KH95}.
The phase portrait of an elliptic billiards (see \cite[Page 12]{CM06})
indicates that there can be more than one (non-disjoint)
invariant curves of the same rational rotation number (even when the map
preserves a smooth measure).
If $f$ admits more than one invariant curves with the same rotation number $\rho$,
then $\rho$ is rational, and 
all these curves intersect along some common Birkhoff periodic orbits of $f$.
\end{remark}

\begin{proof}
it follows from \cite[Theorem 13.2.9]{KH95} that $\rho$ must be rational, say $p/q$.
Let $\{C_{\alpha}:\alpha\in \cA\}$ be the collection of invariant curves with the  rotation number $\rho$.
It follows from Proposition \ref{Birkhoff} that for each $\alpha\in \cA$,  
$C_\alpha=\{(x,\phi_\alpha(x)): x\in \bT\}$
for some Lipschitz functions $\phi_\alpha: \bT\to [0,1]$ with a uniform Lipschitz constant.
Let $\psi_1(x)=\inf_\alpha\{\phi_\alpha(x)\}$ and $\psi_2(x)=\sup_\alpha\{\phi_\alpha(x)\}$. 
Then $\psi_1$ and $\psi_2$ are two Lipschitz functions, whose graphs $\gamma_1$
and $\gamma_2$
are invariant curves of $f$ with rotation number $\rho$.

It follows from Corollary  \ref{disjoint} that the intersection $E:=\gamma_1 \cap \gamma_2$
is nonempty, 
which is also closed and $f$-invariant. Let $X=\pi_1(E)=\{x\in\bT: \psi_1(x)=\psi_2(x)\}$,
and enumerate the component complement $\bT\backslash X$, say $I_n$, $n\ge 1$.

Then for each $n\ge 1$, the two invariant curves $\gamma_1$
and $\gamma_2$ bound an open disk $D_n$ over $I_n$.
Since both $\gamma_1$ and $\gamma_2$ are invariant, these disks are permuted by $f$.
The non-wandering property of $f$ implies all of the disks are periodically permuted,
and the corresponding points in the intersection $E$ must be periodic.
These periodic points are of Birkhoff type $p/q$, since they lie on an invariant curve of $f$
of rotation number $p/q$.
\end{proof}

\section{Some examples of twist maps}\label{exam}

In this section we give some example to illustrate the different situations of twist maps.
We start with a standard one, and then give some variations of it.
These classes of examples show that some extra assumption is needed 
to get non-degenerate twist intervals and rotation sets with nonempty interior.

\begin{example}\label{circle}
Let $f(x,y)=(x+\phi(y), y)$, $(x,y)\in A$, where $\phi:[0,1]\to \bR$ is a continuous
and increasing function. Then $f$ is a twist map that preserves the Lebesgue measure on $A$,
and the rotation set $I_f=[\phi(0),\phi(1)]$.
\end{example}

Our first class of variations of the standard twist map  is
\begin{example}\label{float}
Let $f(x,y)=(x+\phi(y), \psi(y))$, $(x,y)\in A$, where $\phi:[0,1]\to \bR$ is increasing, 
and $\psi$ is a homeomorphism on $[0,1]$ that fixes the two endpoints.  
Let $\text{Fix}(\psi)=\{y\in [0,1]: \psi(y)=y\}$ be the set of points fixed by $\psi$.
Then $f$ is a twist map whose nonwondering set is $\Omega(f)=\bT\times \text{Fix}(\psi)$.
The rotation set $I_f=\{\phi(y): y\in \text{Fix}(\psi)\}$ can be any closed subset of $[\phi(0), \phi(1)]$.
\end{example}

\vskip.1in

To introduce the second variations,
we briefly recall the Mode Locking phenomena in circle dynamics. 
See \cite[Section 1.4]{dMvS} for more details.
Let $g_0:\bT\to \bT$ be a circle homeomorphism with a periodic point $x_0\in \bT$ of period $p/q$.
Then $\rho(g_0)=p/q$.
Assume the graph of the $q$-th iterate $g_0^q$ crosses the diagonal at $x_0$.
Then for any circle map $g$ that is close to $g_0$, 
the graph of $g^q$ also crosses the diagonal, which implies that the rotation number $\rho(g)$ of $g$
is locked at $p/q$. 

\begin{example}\label{deg}
Let $f_0$ be a  circle homeomorphism with locked mode $p/q$.
Consider the one-parameter family $\{f_t: t\in\bR\}$ of maps, where $f_t: x\in \bT \mapsto f_0(x)+t$.
Then there exists $\eps_0>0$ 
such that $\rho(f_t) =p/q$ for any $t\in[0, \eps_0]$.
Consider the map $f$ on $\bT\times [0,\eps_0]$, define by $f(x,t):=(f_t(x),t)$.
It is easy to see that $f$ satisfies the twist condition, $\rho_0=\rho_1=p/q$ and $I_f=\{p/q\}$.
\end{example}

One might wonder 
what one can say when $\Omega(f)$ has nonempty interior.
To construct our next examples, we first recall the phase portrait of
the billiard map inside an ellipse. See \cite[\S 1.4]{CM06} for more details.

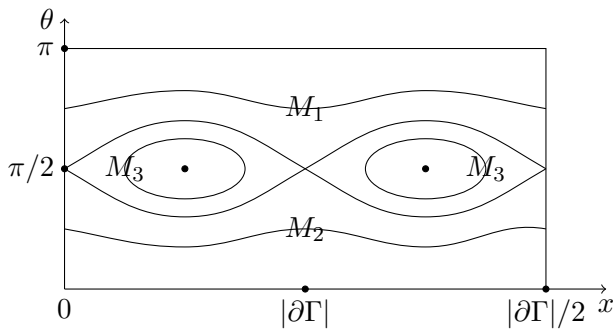
\begin{figure}[h]
\begin{tikzpicture}[scale=0.8]
\draw[->] (0,0) -- (9,0) node[pos=0, align=left, below]{$0$}  node[pos=1, align=left,   below]{$x$};
\draw[->] (0,0) -- (0,4.5) node[left]{$\theta$};
\draw[fill] (4,0) circle (.3ex) node[below]{$|\pa\Gamma|$};
\draw[fill] (8,0) circle (.3ex) node[below]{$|\pa\Gamma|/2$};
\draw[fill] (0,2) circle (.3ex) node[left]{$\pi/2$};
\draw[fill] (0,4) circle (.3ex) node[left]{$\pi$};
\draw (0,4) -- (8,4) -- (8,0);
\draw (0,2)  to[out=30, in=180] (2,2.8)  to [out=0, in=150] (4,2) to [out=30, in=180] (6,2.8)  to [out=0, in=150] (8,2);  
\draw (0,2)  to[out=-30, in=180] (2,1.2)  to [out=0, in=210] (4,2) to [out=-30, in=180] (6,1.2)  to [out=0, in=210] (8,2);  
\draw (2,2) ellipse (1 and .5);
\draw (6,2) ellipse (1 and .5);
\draw[fill] (2,2) circle (.3ex);
\draw[fill] (6,2) circle (.3ex);
\draw (4,3)   node{$M_1$};
\draw (4,1)  node{$M_2$};
\draw (1,2)   node{$M_3$};
\draw (7,2)  node{$M_3$};
\draw (0,3)  to[out=10, in=180] (2,3.3)  to [out=0, in=180] (4,3) to[out=0, in=180] (6,3.3)  to [out=0, in=170] (8,3); 
\draw (0,1)  to[out=-10, in=180] (2,.7)  to [out=0, in=180] (4,1) to[out=0, in=180] (6,.7)  to [out=0, in=170] (8,1); 
\end{tikzpicture}
\caption{Phase portrait of the billiard dynamics inside an ellipse.}
\label{phase}
\end{figure}

Let $\Gamma$ be the ellipse $\frac{x^2}{a^2}+\frac{y^2}{b^2}=1$ with $a>b>0$.
Consider the billiards system inside $\Gamma$.
Then the phase space of the billiard map $F$ is given by $M:=(\bR/|\Gamma|)\times [0,\pi]$,
where $x\in \bR/|\Gamma|$ is the arc-length parameter of $\Gamma$,
and $\theta\in[0,\pi]$ measures the angle from the tangent vector $\Gamma'(x)$
to the velocity vector of the orbit right after the impact on $\Gamma$.
Note that $\frac{dx_1}{d\theta}=\frac{\tau(x,x_1)}{\sin\theta}$,
where $\tau(x,x_1)$ is the Euclidean distance from $\Gamma(x)$ to $\Gamma(x)$,
see  \cite[\S 2.11]{CM06}.
There exists a constant $c=c(\Gamma)>0$ such that $\frac{dx_1}{d\theta} \ge c$.
So $F$ satisfies the twist condition, and $I_F=[0,1]$.

\begin{example}
The phase space $M$ is divided into three parts. Let $M_1$ be the upper part,
$M_2$ be the lower part, and $M_3$ be the center part.
We make a smooth perturbation $G$ of $F$ on the interior of $M_1$
that pushes the invariant curves in $M_1$ upward, 
and on the interior of $M_2$ that pushes the invariant curves in $M_2$ downward,
while keeps $F$ unchanged on $M_3$.
Clearly $G$ satisfies the boundary twist condition and $\Omega(G)\supset M_3$.
Moreover, $G$ still satisfies the twist condition (as long as the perturbation is $C^1$-small)
while $I_G=\{0, 1/2, 1\}$.
\end{example}

For our last example, we insert the dynamics of
the elliptic billiards on the eye-shape domain $M_3$ into twist map with degenerate 
twist interval.
Let $f_0: \bT\to \bT$ be a diffeomorphism with rotation number $\rho_0=1/2$,
such that $f_0(x)=x+1/2$ for $1/6\le x\le 1/3$ and $f_0(x)=x-1/2$ for $2/3\le x\le 5/6$,
and all other points are wandering. See Part (a) of Fig.~\ref{example5}.
There exists $\eps_0>0$ such that
$f_\eps(x):=f_0(x)+\eps$ has a unique periodic orbit of period $2$ whenever 
$0<|\eps|\le\eps_0$.
Consider the induced twist map $f$ on A:=$\bT\times[-\eps_0,\eps_0]$, $f(x,y)=(f_0(x)+y,y)$.
Then $\frac{\pa x_1}{\pa y}=1>0$ and $\rho(f_{-\eps_0})=\rho(f_{\eps_0})=1/2$. 
See Part (b) of Fig.~\ref{example5} for the non-wandering set of $f$.

\begin{example}
Now we make a (piecewise) $C^1$ small perturbation $g$ of $f$ over the two cylinders. 
$[1/6,1/3]\times [-\eps_0,\eps_0]$ and $I_2=[2/3,5/6]\times [-\eps_0,\eps_0]$.
We first cut $A$ along the two flat segments
$I_1=[1/6,1/3]\times\{0\}$ and $I_2=[2/3,5/6]\times\{0\}$, push the upper copy to the right,
and the lower copy to the right
and then paste the restriction of the dynamics of the elliptic billiards $F$ on $M_3$. 
This perturbation resembles the change of the billiard map 
when one deforms the billiard table from a unit disk to an elliptic domain.
Note that $g$ is $C^1$-close to $f$ outside the two eyes, and equals to $F$ on the two eyes.
So $g$ is a twist map.
It is easy to see $\Omega(f)$ has nonempty interior, while $\rho_0=\rho_1=1/2$.
\end{example}

\begin{figure}[h]
\begin{minipage}{0.4\linewidth}
\begin{tikzpicture}[scale=0.7]
\draw[->] (-1,0) -- (7,0) node[pos=.125, align=left, below]{$0$} 
node[pos=.875, below]{$1$} node[pos=1, align=left,   below]{$x$};
\draw[->] (0,0) -- (0,6.5) node[pos=.92, left]{$1$} node[left]{$y$};
\draw[dotted] (0,6) -- (6,6) -- (6,0) (0,0) -- (6,6) (0,3) -- (3,6)  -- (3,0) -- (6,3) -- (0,3);
\draw[fill] (1,0) circle (.2ex);
\draw[fill] (2,0) circle (.2ex);
\draw[fill] (4,0) circle (.2ex);
\draw[fill] (5,0) circle (.2ex);
\draw  (0,3.3)  to [out=40, in=225]  (1, 4) -- (2,5) to [out=40, in=225] (3.3,6);
\draw  (3.3,0) to [out=45, in= 225]  (4, 1) -- (5,2) to [out=40, in=225] (6, 3.3);
\node at (3,-.5){(a)};
\end{tikzpicture}
\end{minipage}
\begin{minipage}{0.4\linewidth}
\begin{tikzpicture}[scale=0.7]
\draw[dotted] (0,4) -- (6,4);
\draw[dotted] (0,4) -- (0,6) node[pos=0, left]{$-\eps_0$}
 node[pos=0.5, left]{$0$}  node[pos=1, left]{$\eps_0$};
\draw[dotted]  (0,5) -- (6,5)  (0,6) -- (6,6) (6,4) -- (6,6);
\draw  (1, 5) -- (2,5) (4, 5) -- (5,5);
\draw[color=blue] (.6,4)  to [out=45, in=250]  (1, 5)  (2,5) to [out=70, in=225] (2.4,6);
\draw[color=red] (2.6,6)  to [out=-45, in=110]  (3, 5) to [out=-70, in=135] (3.4,4);
\draw[color=blue] (3.6,4)  to [out=45, in=250]  (4, 5)  (5,5) to [out=70, in=225] (5.4,6);
\draw[color=red] (5.6,6)  to [out=-45, in=110]  (6, 5)   (0,5) to [out=-70, in=135] (.4,4);
\node at (3,3.5){(b)};
\draw[dotted] (0,0) -- (6,0);
\draw[dotted] (0,0) -- (0,2) (6,0) -- (6,2);
\draw[dotted]  (0,1) -- (6,1)  (0,2) -- (6,2);
\draw[color=blue] (.6,0)  to [out=45, in=250]  (1, 1)  (2,1) to [out=70, in=225] (2.4,2);
\draw[color=red] (2.6,2)  to [out=-45, in=110]  (3, 1) to [out=-70, in=135] (3.4,0);
\draw[color=blue] (3.6,0)  to [out=45, in=250]  (4, 1)  (5,1) to [out=70, in=225] (5.4,2);
\draw[color=red] (5.6,2)  to [out=-45, in=110]  (6, 1)   (0,1) to [out=-70, in=135] (.4,0);
\filldraw[fill=gray, draw=black] (1.5,1) ellipse (.5 and .15);
\filldraw[fill=gray, draw=black] (4.5,1) ellipse (.5 and .15);
\node at (3,-.5){(c)};
\node at (1.5,1.15){$\scriptscriptstyle >$};
\node at (4.5,1.15){$\scriptscriptstyle >$};
\node at (1.5,0.85){$\scriptscriptstyle <$};
\node at (4.5,0.85){$\scriptscriptstyle <$};
\end{tikzpicture}
\end{minipage}
\caption{Construction of a twist map by Cut-and-Paste: (a) graph of the function $f_0$; 
(b) the non-wandering set of the twist map $f$; (c) the perturbation $g$ of $f$.
Blue color indicates the periodic orbit is attracting, while red color indicates repelling.}
\label{example5}
\end{figure}
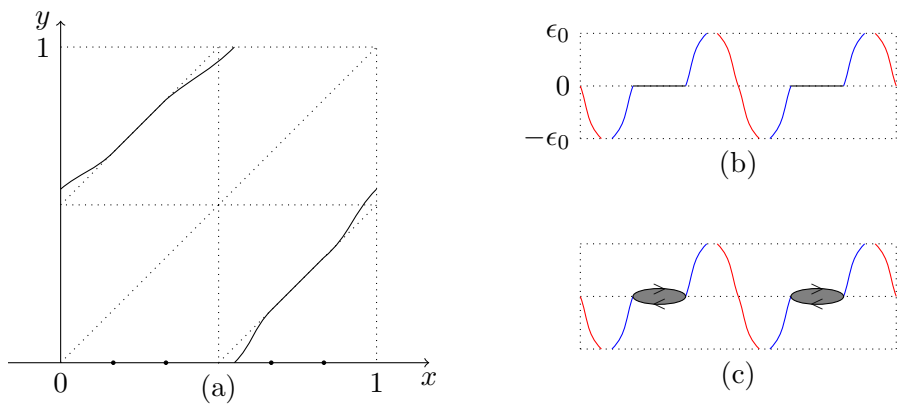

\section*{Acknowledgments}
The author is very grateful to Zhihong Xia for useful discussions.

\end{document}